\def\BState{\State\hskip-\ALG@thistlm}
\title{\LARGE \bf 
Robust Asynchronous and Network-Independent Cooperative Learning
}
\author{{Eduardo Mojica-Nava}\and
{David Yanguas-Rojas} \and 
{C\'esar A. Uribe}
\thanks{E. Mojica-Nava and D. Yanguas-Rojas are with the Department of Electrical and Electronics Engineering, Universidad Nacional de Colombia, Colombia. C. A. Uribe is with the Laboratory for Information and Decision Systems -LIDS, Massachusetts Institute of Technology -M.I.T., Cambridge, MA, USA. E-mails: \url{eamojican@unal.edu.co}, \url{dryanguasr@unal.edu.co}, \url{cauribe@mit.edu}, }
}
\begin{document}
\addtolength{\abovedisplayskip}{-.05cm}
\addtolength{\belowdisplayskip}{-.05cm}
\addtolength{\textfloatsep}{-.5cm}

\maketitle
\thispagestyle{empty}
\pagestyle{empty}

\begin{abstract}
We consider the model of cooperative learning via distributed non-Bayesian learning, where a network of agents tries to jointly agree on a hypothesis that best described a sequence of locally available observations. Building upon recently proposed weak communication network models, we propose a robust cooperative learning rule that allows asynchronous communications, message delays, unpredictable message losses, and directed communication among nodes. We show that our proposed learning dynamics guarantee that all agents in the network will have an asymptotic exponential decay of their beliefs on the wrong hypothesis, indicating that the beliefs of all agents will concentrate on the optimal hypotheses. Numerical experiments provide evidence on a number of network setups.   

\end{abstract}
\theoremseparator{.}

\newtheorem{proposition}{Proposition}
\newtheorem{theorem}{Theorem}
\newtheorem*{theoremnonumber}{Theorem}
\newtheorem{corollary}{Corollary}
\newtheorem{lemma}{Lemma}
\newtheorem{Fact}{Fact}
\newtheorem{remark}{Remark}
\newtheorem{assumption}{Assumption}
\newtheorem{definition}{Definition}

\newcommand{\eqdef}{\vcentcolon=}
\newcommand{\beq}{\begin{equation}}
\newcommand{\eeq}{\end{equation}}
\newcommand{\ie}{i.e., }

\section{Introduction}
Distributed inference has gained increasing attention in recent years due to the numerous applications in machine learning, sensor networks, decentralized control, and distributed signal processing. Among distributed inference models, non-Bayesian social learning has emerged as an essential approach to deal with decentralized heterogeneous learning over networks~\cite{Ali:12,Ali:18}. Non-Bayesian learning exhibits strong theoretical performance and allows large classes of sensing modalities and communication constraints. The non-Bayesian learning model assumes that the network of agents tries to agree on a set of beliefs about the state of the world that best describes a sequence of local observations from a finite set of possible states~\cite{Ali:12,Ali:18,Lalitha:18,shahrampour2013,Nedic:16tutorial}. Agents cooperate by exchanging and updating beliefs with their neighbors (in the network). Updating happens by aggregating information via some fusion rule. Moreover, the interaction between the agents over the network is usually modeled as a graph that defines and mediates such communication. 

Following the seminal work on Bayesian~\cite{Ace:11} and non-Bayesian \cite{Ali:12} social learning, several distributed learning rules, and their corresponding theoretical guarantees have been proposed in the literature. These rules consists of two main steps:  aggregation using a particular weighted (geometric) average of beliefs~\cite{Tsi:84,Ali:03,NeOl:15}; followed by Bayesian update of the aggregated beliefs \cite{Ace:11,Kan:13}. A number of variations of non-Bayesian social learning have been proposed, including distributed algorithms under different connectivity conditions such as weighted arithmetic averages~\cite{Ali:12, shahrampour2013}, geometric averages~\cite{RMJ2014, Lalitha:18}, constant elasticity of substitution model~\cite{Ali:18}, and minimum operators~\cite{sundaram2019new, MRS2019_CDC}. These learning rules have been applied to undirected/directed graphs, time-varying graphs~\cite{NOU2015,NOU2017}, weakly-connected graphs~\cite{ SYS2018}, agents with increasing self-confidence~\cite{UJ2019}, compact hypothesis sets~\cite{NOU2017_compact}, uncertain models~\cite{hare2020non}, and under adversarial attacks~\cite{BT2018, su2019defending, VSV2019, HULJ2019}.          

Previous works individually tackle harsh network conditions, such as asynchronous updates, delays, or message losses. An early attempt to deal with asynchronous updates is presented in~\cite{feldman2014reaching}, where agents are not required to send their beliefs every iteration-time. However, agreement of the beliefs depends on the network structure, and it is shown that there exist networks for which consensus is unlikely or converges to incorrect beliefs. In~\cite{Nithin2016asynchronous}, the effect of a finite set of simultaneous agents that suffer a crash fault is analyzed. It is assumed that up to a finite number of agents cease operating during any execution, and then it is proven the convergence of the network of agents to the true hypothesis. Scenarios where the message is delayed nor loss or that the agents are asleep not in fault are not considered. Additionally, we will see that network-independent learning rates can be achieved. Adversarial attacks have also been previously considered \cite{sundaram2019new,su2019defending}, but we will assume all agents are collaborative. 

In this paper, we build upon recently available results in distributed optimization considering asynchrony, delays, and message losses \cite{Wu:2017,Tian2018asy,hadjicostis2015robust,hadjicostis2018distributed}, and introduce a cooperative distributed non-Bayesian learning algorithm with robust performance guarantees under such harsh communication network conditions. In particular, we extend the recent proposed Robust Asynchronous Push-Sum (RAPS) consensus algorithm~\cite{JMLR:21} to the distributed learning setup.

\textit{The main contribution of this paper is threefold:}
\begin{itemize}
    \item We introduce a robust distributed non-Bayesian cooperative learning algorithm that considers asynchronous updates, communication delays, and unpredictable messages losses over a directed communication graph. A network of agents tries to jointly agree on a hypothesis that best describes a sequence of locally and asynchronously available observations.
    \item We show that our proposed learning dynamics guarantee that all agents in the network will have an asymptotic exponential decay of their beliefs on the wrong hypothesis, indicating that the beliefs of all agents will concentrate on the optimal hypotheses.
    \item We present numerical experiments that provide evidence of the proposed algorithm's performance on a number of network setups.
\end{itemize}

This paper is organized as follows. Section~\ref{sec:problem} introduces the distributed non-Bayesian problem, the communication network model, and the proposed algorithm and main asymptotic exponential convergence result. Section~\ref{sec:analysis} recalls the robust asynchronous push-sum algorithm (RAPS)~\cite{JMLR:21}, and proofs our main belief concentration result. Section~\ref{sec:numerics} presents numerical results for the proposed algorithm for three different network topologies and communication failures. Finally, conclusions and future work are shown in Section~\ref{sec:conclusions}.

\noindent \textbf{Notation:} We denote random variables as capital letters, e.g., $X$, and use its corresponding lower case for its realizations, e.g., $x$. Node indices are usually denoted by the letters $i$ and $j$, time indices or iterations are denoted by $t$ or $k$. Bold letters will usually denote concatenation or stacking of vectors. We denote $[A]_{ij}$ as the entry of the $A$ matrix at its $i$-th row and $j$-th column. $\circ$ denotes entry wise-product. Vector are assumed to be column vectors and $x^\top$ denotes the transpose of the vector $x$. We write $\boldsymbol{1}$ as the all-ones vectors with appropriate dimension. For a sequence of matrices $\{A_k\}_{k\geq 0}$, we let $A(k_f:k_i) \eqdef A_{k_f}\cdots A_{k_i+1}A_{k_i}$ for all $k_f\geq k_i\geq 0$.

\section{Cooperative Learning: Problem statement, Network model, and Results}\label{sec:problem}
In this section, we present the distributed non-Bayesian learning problem. Furthermore, we state the general network model we will consider. Additionally, we present the proposed learning dynamics and the asymptotic convergence results.

\subsection{Problem Statement}

Consider a network of $n$ agents on a set of nodes ${V}=\{1,2, \ldots, n\}$ observing realizations of a finite, stationary, independent, identically distributed random processes $\{X_k\}_{k\geq 1}$ where $X_k^i \sim P^i$ at each iteration time $k$ with unknown distribution $P^i$. Additionally, all agents have a shared finite set of hypotheses  $\Theta = \{\theta_1, \theta_2, \ldots, \theta_m \}$, from which each agent $i\in V$ defines a local family of distributions $\mathcal{P}^i = \{P^i_{\theta} \mid \theta \in \Theta\}$. We will assume the technical condition that each element in family of distributions $\mathcal{P}^i$ is absolutely continuous with respect to $P^i$. We denote $\mathcal{N}_i^{+}$ and $\mathcal{N}_i^{-}$   as the set of out-neighbors and in-neighbors of an agent $i$.

The objective of the network of agents is to agree on a parameter $\theta^* \in \Theta$ such that the joint distribution $\prod P^i_{\theta^*}$ is closest (in a statistical sense to be defined later) to $\prod P^i$.

Formally, the group of agents tries to solve jointly
\begin{align}\label{main:problem}
    \min_{\theta \in \Theta} F(\theta) \triangleq \sum_{i\in V} D_{KL}(P^i \| P^i_\theta),
\end{align}
where $D_{KL}(P\|Q)$ is the Kullback-Leibler divergence between the distributions $P$ and $Q$. Importantly, note that each of the agents only knows its local family of distributions $\mathcal{P}^i$, the true distribution of their local observations $P^i$ is unknown, yet accessible via local observations. Thus, in order to solve \eqref{main:problem}, cooperation is needed. Moreover, note that we have not assumed that the minimizer of~\eqref{main:problem} is unique. We will generally denote the set of minimizers as $\Theta^*$.

\begin{remark}
Note that we have not assumed that there is a $\theta \in \Theta$ for which $D_{KL}(P^i \| P^i_\theta)=0$. Thus, the true distribution of the observations might not be in the hypotheses set; such a scenario is usually referred to as \textit{misspecified models}.
\end{remark}

The confidence each agent has on each of the hypotheses in~$\Theta$ is represented by a \textit{belief} vector, denoted as $\mu_i^\theta(k)$, which indicates the belief that an agent $i\in V$ has about a hypothesis $\theta\in\Theta$ at certain time instant $k$. A value of $\mu_i^\theta(k)= 1$ indicates certainty that the minimizer of~\eqref{main:problem} is $\theta$, whereas $\mu_i^\theta(k) = 0$ indicates certainty that it is not.

\subsection{Network Model}

Agents cooperate by communicating their beliefs at each time instant. Such communication is mediated by a network, modeled as a graph $\mathcal{G} = \{V, E\}$. Following the assumptions from~\cite[Assumption 1]{JMLR:21}, we consider networks for which there might be communication delays in the links, asynchronous node activation, and link failures. 
` 
\begin{assumption}[Assumption 1 in~\cite{JMLR:21}] \label{assum:network}
Suppose:
\begin{itemize}
    \item[(a)] Graph $\mathcal{G}$ is strongly connected and does not have self-loops.
    \item[(b)] The delays on each link are bounded above by some $L_{del}\geq 1$.
    \item[(c)] Every agent wakes up and performs updates at least every $L_u \geq 1$ iterations.
    \item[(d)] Each link fails at most $L_f\geq 1$ consecutive times.
    \item[(e)] Messages arrive in the order of time they were sent. In other words, if message is sent from node $i$ to node $j$ at time $k_1$ and $k_2$, respectively, with effective delays $d_1$ and $d_2$, then $k_1+d_1 < k_2+d_2$.
\end{itemize}
\end{assumption}

Assumption~\ref{assum:network} is, to the best of the author's knowledge, the weakest assumptions available in the literature for cooperative learning. It allows finite communication delays between agents and possible link failures, and asynchronous activation of the nodes. However, we impose the useful assumption that messages will arrive in the order they were sent. As pointed out in~\cite{JMLR:21}, the main consequence of Assumption~\ref{assum:network} is that effective delays will be bounded by $L_{del}+L_u-1$. Moreover, if $(j,i)\in E$, node $j$ will receive a message from node $i$ successfully, at least once every $L_s = L_u(L_f+1) + (L_{del}+L_u-1)$. The reader is referred to~\cite{JMLR:21} for more details about this network model.  

We will also need an additional assumption that guarantees the log-likelihood ration of the distributions in the hypotheses set is bounded, both from above and bellow.
\begin{assumption}\label{assum:bound}
    There exists a $\beta>0$ such that $P^i_\theta(\cdot) \geq \beta$ for all $\theta \in\Theta$ for all $i\in V$.
\end{assumption}

\subsection{Proposed Learning Algorithm and Main Result}

Next, we state Algorithm~\ref{alg:main}, our proposed cooperative learning algorithm, and state our main results.

\begin{algorithm}[t]
    \caption{Robust Asynchronous Push-Sum Distributed Non-Bayesian Learning}
    \begin{algorithmic}[1]
     \State{\textbf{Initialize:} $y_i(0)=1$, $\phi_i^y(0)=0$, $\phi_{i,\theta}^{\mu}(0)=1$, $\forall i \in V$, and $\rho_{ij}^y(0)=0$, $\kappa_{ij}(0)=0, \forall (i,j) \in \mathcal{E}$}
   \State{Set initial beliefs as uniform for all agents.}
   \For{ $k=0,1,2, \ldots,$ for every node $i$:}   
        \If{Node $i$ wakes up}
        \Statex{\textbf{1. Processing and broadcasting local information}}
        \State{$\kappa_i \gets k$, $\phi_{i}^{y} \gets \phi_{i}^y+{y_i}/({d_i^+ +1})$\;}
        \State{$\phi_{(i,\theta)}^{\mu} \gets \phi_{(i,\theta)}^{\mu}\Big(\mu_i^{\theta}\Big)^{{y_i}/({d_i^+ +1})}$ }
        \State{Node $i$ broadcasts $(\phi_i^y,\phi_{(j,\theta)}^{\mu}, \kappa_i)$ to $\mathcal{N}_i^+$.}
        \Statex{\textbf{2. Processing received messages}}
        \For{$(\phi_j^y, \phi_{(i,\theta)}^{\mu}, \kappa_j')$ in the inbox} 
            \If{$\kappa_j' > \kappa_{ij}$}
                \State{$\rho_{ij}^{*y} \gets \phi_j^y$, $\rho_{ij|\theta}^{*\mu} \gets \phi_{(j,\theta)}^{\mu}$, $\kappa_{ij} \gets \kappa_j'$}
            \EndIf
        \EndFor
        \Statex{\textbf{3. Updating beliefs and local information}}
                \State{$\hat{y}_i \gets \frac{y_i}{d_i^+ +1}+\sum\limits_{j \in \mathcal{N}_i^-}(\rho_{ij}^{*y}-\rho_{ij}^y)$\;}
                \State{$\mu_i^{\theta} \gets \frac{1}{Z_i}\Big(\Big(\mu_i^{\theta}\Big)^{\frac{y_i}{d_i^+ +1}}\prod\limits_{j\in\mathcal{N}_i^-} \Big(\frac{\rho_{ij\mid \theta}^{*\mu}}{\rho_{ij\mid \theta}^{\mu}}\Big) P^i_\theta(x^i_{k+1})\Big)^{\frac{1}{\hat{y}_i}}$\;}
                \Statex{$Z_i$ is a normalization constant.}
                \State{$y_i \gets \hat{y}_i$, $\rho_{ij}^y \gets \rho_{ij}^{*y}$, $\rho_{ij \mid \theta}^{\mu} \gets \rho_{ij\mid \theta}^{*\mu}$\;}
        \EndIf 
     \EndFor
    \end{algorithmic}
    \label{alg:main}
\end{algorithm}

In Algorithm~\ref{alg:main}, each awake node executes three main states at every iteration. Initially, local variables are updated with the most recent information about outgoing neighbors for each possible hypothesis. This local processing step is concluded by broadcasting auxiliary variables and time-stamps to its available out-neighbors at that particular time. Then, each agent modes on processing the messages it might have arrived from its in-neighbors while not awake. Each agent first checks time-stamps for each of the messages and updates the stored neighbor information if newer information is available. Finally, the node updates its beliefs with the most recent information from its neighbors, and its local observation of the random variable $X_k^i$, and goes to sleep mode again. This process repeats at each iteration.

Now, we are ready to state the main result of this paper. Specifically, we show that the learning dynamics proposed in Algorithm~\ref{alg:main} guarantees that the beliefs of all agents will concentrate in the set of minimizers of $F(\theta)$, denoted as $\Theta^*$. The proof of this theorem will be shown in Section~\ref{sec:analysis}.

\begin{theorem}[Main Result]\label{thm:main}
Let Assumptions~\ref{assum:network} and~\ref{assum:bound} hold. Then, the output of Algorithm~\ref{alg:main} has the following property:
\begin{equation}
    \lim_{k \to \infty} \frac{1}{k}\log\frac{\mu^i_{\theta_v}(k)}{\mu^i_{\theta_w}(k)} \leq - \frac{1}{n} \min_{\theta \notin \Theta^*}\left( F(\theta) -F(\theta^*) \right)
\end{equation}
almost surely for all $\theta_v \notin \Theta ^*$, and $\theta_w \in \Theta ^*$, and $i\in V$.
\end{theorem}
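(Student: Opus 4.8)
The plan is to pass to the logarithmic domain, where the geometric-mean belief update of Algorithm~\ref{alg:main} becomes an \emph{additive} push-sum recursion driven by the stream of log-likelihoods. First I would eliminate the normalization constant $Z_i$ by working with the log-ratio $\lambda_i^{vw}(k) \eqdef \log\big(\mu_i^{\theta_v}(k)/\mu_i^{\theta_w}(k)\big)$ for a fixed pair $\theta_v \notin \Theta^*$, $\theta_w \in \Theta^*$, since $Z_i$ is common to all hypotheses and cancels. Introducing the de-biased value variables $g_i^{vw}(k) \eqdef y_i(k)\,\lambda_i^{vw}(k)$ together with the push-sum mass $y_i(k)$, the belief update turns into a linear recursion of push-sum type,
\begin{equation}
  g_i^{vw}(k+1) = \sum_{j} [A_k]_{ij}\, g_j^{vw}(k) + r_i^{vw}(k+1),\qquad \lambda_i^{vw}(k)=\frac{g_i^{vw}(k)}{y_i(k)},
\end{equation}
where $r_i^{vw}(k+1)=\log\big(P^i_{\theta_v}(x^i_{k+1})/P^i_{\theta_w}(x^i_{k+1})\big)$ is injected only when node $i$ is awake, and $\{A_k\}$ is the column-stochastic sequence of matrices induced by the wake/broadcast/receive bookkeeping. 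Because of the running-sum/difference mechanism in the $\phi,\rho,\kappa$ variables, this sequence in fact lives on an augmented graph whose extra coordinates encode in-transit and delayed messages.

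Next I would invoke the RAPS analysis of~\cite{JMLR:21}: under Assumption~\ref{assum:network}, the backward products $A(k_f:k_i)$ of these column-stochastic matrices converge to a rank-one matrix geometrically fast, uniformly in $k_i$. Since the mass is conserved ($\boldsymbol{1}^\top$ is a common left eigenvector and $y_i(0)=1$), the push-sum ratio recovers the \emph{exact network average}, which is the source of the $1/n$ factor: for each agent $i$,
\begin{equation}
  \lambda_i^{vw}(k) = \frac{1}{n}\sum_{j\in V}\sum_{s\le k} r_j^{vw}(s) + \varepsilon_i(k),
\end{equation}
where the consensus/transient error $\varepsilon_i(k)$ is controlled by geometric forgetting of the past against uniformly bounded per-step increments (bounded via Assumption~\ref{assum:bound}), and is therefore bounded, hence $o(k)$. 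The uniform initial beliefs make the initial-condition contribution vanish as well.

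Dividing by $k$ and letting $k\to\infty$, the error term drops out, and it remains to evaluate $\tfrac1k\sum_{s\le k} r_j^{vw}(s)$. Since Assumption~\ref{assum:network}(c) forces each node to wake $\Theta(k)$ times and Assumption~\ref{assum:bound} makes the log-likelihood ratios integrable, the strong law of large numbers gives, almost surely,
\begin{equation}
  \frac1k\sum_{s\le k} r_j^{vw}(s) \xrightarrow[k\to\infty]{} \mathbb{E}_{P^j}\!\left[\log\frac{P^j_{\theta_v}(X)}{P^j_{\theta_w}(X)}\right] = D_{KL}(P^j\|P^j_{\theta_w}) - D_{KL}(P^j\|P^j_{\theta_v}).
\end{equation}
Summing over $j$ and using $\theta_w\in\Theta^*$, so that $F(\theta_w)=F(\theta^*)$, yields $\lim_k \tfrac1k\lambda_i^{vw}(k) = \tfrac1n\big(F(\theta^*)-F(\theta_v)\big)$, which is bounded above by $-\tfrac1n\min_{\theta\notin\Theta^*}\big(F(\theta)-F(\theta^*)\big)$ since $F(\theta_v)-F(\theta^*)\ge \min_{\theta\notin\Theta^*}\big(F(\theta)-F(\theta^*)\big)$; this is exactly the claim.

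The main obstacle is the second step: rigorously reducing the asynchronous, delayed, and lossy recursion to a standard push-sum with a rank-one product limit, and quantifying $\varepsilon_i(k)=o(k)$. This requires tracking the auxiliary variables $\phi,\rho,\kappa$ so that lost messages are recovered by the running-sum/difference mechanism and effective delays stay bounded by $L_{del}+L_u-1$, so that Assumption~\ref{assum:network} delivers the geometric mixing of~\cite{JMLR:21}; I would lean on that reference for the consensus rate rather than re-derive it. A secondary technical point is the interplay between the asynchronous update counts and the uniform $1/n$ weighting: one must verify, again via Assumption~\ref{assum:network}(c) and (e), that the network average aggregates each agent's injected stream so that the per-agent Kullback--Leibler terms appear with equal weight. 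Integrability for the SLLN, and hence the almost-sure convergence, is guaranteed by the uniform lower bound $\beta$ of Assumption~\ref{assum:bound}.
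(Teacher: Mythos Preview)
Your proposal is correct and follows essentially the same route as the paper: pass to log-belief ratios, set $g_i^{vw}=y_i\lambda_i^{vw}$ (the paper's $\varphi_i^{\bar\theta}$) to obtain an affine push-sum recursion on the RAPS-augmented state (the paper's Lemma~\ref{lemma:linear}), then use the geometric mixing of the column-stochastic products from~\cite{JMLR:21} together with Assumption~\ref{assum:bound} to kill the consensus error, and Kolmogorov's SLLN to replace the empirical log-likelihood sums by the KL gap $F(\theta^*)-F(\theta_v)$. The ``secondary technical point'' you flag about equal per-agent weighting is exactly the step the paper absorbs into its definition of $\bm{L}^{\bar\theta}$ in Lemma~\ref{lemma:linear}; otherwise the decomposition and limits are identical.
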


Theorem~\ref{thm:main} states that for all non-optimal hypothesis, the beliefs will decay asymptotically exponentially fast. Moreover, the rate at which the beliefs will decay asymptotically is upper bounded by the averaged optimality gap of the second-best hypothesis, i.e., $F(\theta) -F(\theta^*)$. If the model is well specified, we can expect $F(\theta^*)=0$, which will make the concentration rate large. However, the \textit{closer} (in the sense of Kullback-Leibler) the optimal and the closest suboptimal hypothesis are, the slower the concentration will happen. Additionally, the concentration rate is network-independent, in the sense that as long Assumption~\ref{assum:network} holds, the concentration rate will not depend on the specific network topology. Such asymptotic network-independence additionally implies that the effects of the network topology are transient. 

\begin{remark}
Note that the result in Theorem~\ref{thm:main} implies that $\lim_{k \to \infty} \mu^i_k(\theta) =0 $ almost surely for all $\theta \notin \Theta^*$.
\end{remark}

In the next section, we will analyze Algorithm~\ref{alg:main} and provide formal proof for Theorem~\ref{thm:main}.


\section{Asymptotic Exponential Convergence Analysis}\label{sec:analysis}


In this section, we prove our main result in Theorem~\ref{thm:main}. To do so, we first recall an auxiliary result that will allow us to show a consensus under Assumption~\ref{assum:network}. We proceed to show the asymptotic exponential concentration of the beliefs for all agents.

\subsection{Robust Asynchronous Push-Sum (RAPS)}

In this subsection, we briefly recall the main result from~\cite{JMLR:21} about the convergence of a robust asynchronous version of the push-sum algorithm, initially the consensus algorithm over directed graphs was proposed in~\cite{tsianos2012push}. First, let us recall the RAPS Algorithm proposed in~\cite{JMLR:21}. 

\begin{algorithm}[t]
    \caption{Robust Asynchronous Push-Sum (RAPS)}
    \begin{algorithmic}[1]
     \State{\textbf{Initialize:} $y_i(0)=1$, $\phi_i^x(0)=0$, $\phi_{i}^y(0)=0$, $\forall i \in V$, and $\rho_{ij}^x(0)=0$, $\rho_{ij}^y(0)=0$, $\kappa_{ij}(0)=0, \forall (i,j) \in \mathcal{E}$}
   \For{ $k=0,1,2, \ldots,$ for every node $i$:}   
        \If{Node $i$ wakes up}
        \State{$\kappa_i \gets k$, $\phi_{i}^{y} \gets \phi_{i}^y+\frac{y_i}{d_i^+ +1}$, $\phi_{i}^{x} \gets \phi_{i}^x+\frac{x_i}{d_i^+ +1}$\;}
        \State{Node $i$ broadcasts $(\phi_i^y,\phi_i^x, \kappa_i)$ to $\mathcal{N}_i^+$.}
        \For{$(\phi_j^y, \phi_i^x, \kappa_j')$ in the inbox} 
            \If{$\kappa_j' > \kappa_{ij}$}
                \State{$\rho_{ij}^{*y} \gets \phi_j^y$, $\rho_{ij|\theta}^{*\mu} \gets \phi_{(j,\theta)}^{\mu}$, $\kappa_{ij} \gets \kappa_j'$}
            \EndIf
        \EndFor
                \State{$y_i \gets \frac{y_i}{d_i^+ +1}+\sum\limits_{j \in \mathcal{N}_i^-}(\rho_{ij}^{*y}-\rho_{ij}^y)$\;}
                \State{$x_i \gets \frac{x_i}{d_i^+ +1} +\sum\limits_{j \in \mathcal{N}_i^-}(\rho_{ij}^{*x}-\rho_{ij}^x)$\;}
                \State{$\rho_{ij}^y \gets \rho_{ij}^{*y}$, $\rho_{ij}^x \gets \rho_{ij}^{*x}$, $z_i \gets \frac{x_i}{y_i}$}
        \EndIf 
     \EndFor
    \end{algorithmic}
    \label{alg:raps}
\end{algorithm}

Algorithm~\ref{alg:raps} was shown to guarantee consensus agreement on the average initial values of the variable $x_i$ among all nodes, for a network model for which Assumption~\ref{assum:network} holds. This result is stated in the next theorem.

\begin{theorem}[Theorem 6 in~\cite{JMLR:21}]\label{thm:raps}
Suppose Assumption~\ref{assum:network} holds. Then, RAPS converges exponentially to the initial mean of agent values, i.e.,
\begin{align*}
    \Big| z_i(k) - \frac{1}{n}\sum_{i=1}^n x_i(0) \Big| \leq \delta \lambda^k \|\mathbf{x}(0)\|_1,
\end{align*}
where $\delta\eqdef 1/(1-n \alpha^6)$, $\lambda \eqdef (1-n\alpha^6)^{1/(2nL_s)}$, and $\alpha\eqdef 1/n^{nL_s}$.
\end{theorem}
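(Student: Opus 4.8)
The plan is to prove exponential consensus by reducing the asynchronous, delayed, lossy RAPS dynamics to a product of column-stochastic matrices and then establishing a geometric contraction of these products toward a rank-one limit. First I would introduce an \emph{augmented state}: beyond the node variables $(x_i,y_i)$, I would add virtual buffer coordinates that store the mass of each message currently in transit or held in the $\rho_{ij}$ registers. With this bookkeeping the updates of Algorithm~\ref{alg:raps} become a single linear recursion $\tilde{\mathbf{x}}(k+1)=M_k\tilde{\mathbf{x}}(k)$ and $\tilde{\mathbf{y}}(k+1)=M_k\tilde{\mathbf{y}}(k)$ driven by the \emph{same} matrices $M_k$, because $x_i$ and $y_i$ are updated by identical rules. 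The crucial structural fact, which relies on Assumption~\ref{assum:network}(e) (in-order delivery) together with the time-stamp test $\kappa_j'>\kappa_{ij}$, is that each unit of mass leaving a node as $x_i/(d_i^+ + 1)$ is recovered exactly once through the increments $\rho_{ij}^{*}-\rho_{ij}$; no mass is created, duplicated, or lost. Hence every $M_k$ is column-stochastic, giving conservation $\mathbf{1}^\top\tilde{\mathbf{x}}(k)=\sum_i x_i(0)$ and $\mathbf{1}^\top\tilde{\mathbf{y}}(k)=n$ for all $k$.

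Next I would establish a uniform positive lower bound on the entries of products taken over long enough windows. By Assumption~\ref{assum:network} the effective delay is at most $L_{del}+L_u-1$ and every edge $(j,i)\in E$ transmits successfully at least once in any window of length $L_s$. Combining this with strong connectivity (Assumption~\ref{assum:network}(a)), I would show that concatenating on the order of $n$ such windows produces, for every ordered pair of augmented coordinates, a directed influence path; since each elementary weight is $y_i/(d_i^+ + 1)\ge 1/n$, the product of weights along such a path is at least $(1/n)^{nL_s}=\alpha$. This yields that $M(k_f:k_i)$ over any window of length $nL_s$ is entrywise bounded below by $\alpha$.

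With this positivity I would invoke a weak-ergodicity argument: when a column-stochastic product has all entries at least $\alpha$, its action contracts the spread (maximum minus minimum over coordinates) of any vector, and composing a constant number of such windows reduces the distance to the rank-one consensus subspace by the factor $1-n\alpha^6$, the power $\alpha^6$ arising from the several window concatenations needed to guarantee that, in the augmented delayed system, any two columns share a common strictly positive entry. Iterating over $\lfloor k/(2nL_s)\rfloor$ windows gives the geometric decay $\lambda^k$ with $\lambda=(1-n\alpha^6)^{1/(2nL_s)}$, while $\delta=1/(1-n\alpha^6)$ absorbs the fluctuation within the last incomplete window. Finally, since $\tilde{\mathbf{x}}$ and $\tilde{\mathbf{y}}$ are driven by the same $M_k$, they converge to the same normalized limiting direction, so $z_i(k)=x_i(k)/y_i(k)$ converges to $\mathbf{1}^\top\tilde{\mathbf{x}}(0)/\mathbf{1}^\top\tilde{\mathbf{y}}(0)=\frac1n\sum_i x_i(0)$, with error controlled by the contraction bound times $\|\mathbf{x}(0)\|_1$.

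The main obstacle I expect is the first two steps taken together: correctly designing the augmentation so that the delayed, failure-prone, asynchronous recursion is \emph{exactly} a column-stochastic linear system (in particular verifying via the time-stamp and in-order mechanism that mass is conserved across link failures and out-of-sync wake-ups), and then proving the uniform entrywise lower bound for products of these augmented matrices. The precise combinatorics of how $L_{del}$, $L_u$, and $L_f$ collapse into the single recovery period $L_s$ and into the exponent $\alpha^6$ is delicate; once the positivity of long products is secured, the contraction and the ratio limit follow from standard push-sum arguments. Since the statement is imported verbatim as Theorem~6 of~\cite{JMLR:21}, in the present paper one may simply cite it, but the sketch above is how I would reprove it from scratch.
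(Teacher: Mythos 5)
Your proposal is correct and takes essentially the same route as the source: the paper itself does not reprove Theorem~\ref{thm:raps} but imports it verbatim from~\cite{JMLR:21}, and its own summary of the key idea---writing the iterates as the ratio of two linear processes driven by the \emph{same} column-stochastic matrices via virtual delay/buffer nodes, cf.~\eqref{eq:linear_raps} and~\cite[Lemma 5]{JMLR:21}---is exactly the augmentation, mass-conservation, window-positivity, and weak-ergodicity argument you sketch. One cosmetic note: the elementary splitting weights are $1/(d_i^{+}+1)\ge 1/n$ (the $y_i$ are state variables, not matrix entries), which does not affect your argument.
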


The key insight into the convergence analysis of the RAPS algorithm (Algorithm~\ref{alg:raps}) is that, although not evident at first sight, one can write its dynamics of the variables $z_i$ as the ratio of two linear processes, with appropriately defined mixing matrices. In particular, the authors in~\cite{JMLR:21} showed that the iterates in Algorithm~\ref{alg:raps} are equivalent to a couple of processes
\begin{align}\label{eq:linear_raps}
    \boldsymbol{\chi}(k{+}1) {=}\mathbf{M}(k) \boldsymbol{\chi}(k), \ \text{and}\ \boldsymbol{\phi}(k{+}1) {=} \mathbf{M}(k) \boldsymbol{\phi}(k),
\end{align}
where $\mathbf{M}(k)(k)\in \mathbb{R}^{(n+m')\times (n+m')}$ is a sequence of appropriately defined column-stochastic matrices~\cite[Lemma 5]{JMLR:21}. More importantly, the first first $n$ elements of the vectors $\boldsymbol{\chi}(k)$ and $\boldsymbol{\phi}(k)$ are $\mathbf{x}(k)$, and $\mathbf{y}(k)$, respectively.

\subsection{Robust Asynchronous Push-Sum Distributed Non-Bayesian Learning}

Our general strategy to show the exponential asymptotic convergence of the proposed method is to initially show that the dynamics of Algorithm~\ref{alg:main} will follow a perturbed version of the linear dynamics~\eqref{eq:linear_raps}. This is going to be our first goal. Before presenting such a relation, we define a couple of auxiliary variables that simplify notation and state the result of the linear dynamics.

\begin{itemize}
    \item $y_{ij}^l$: The variable $y_{ij}^l$ for $(i,j)\in E$ and $1\leq l \leq L_d$ indicates the information that was sent from node $i$ to node $j$ and will arrive with an effective delay of $l$. Effectively, we are introducing ``virtual nodes'' as buffers inducing delays in the communications, and the variables $y_{ij}^l$ will be the values held by those virtual nodes. The idea of virtual nodes was originally proposed in~\cite{hadjicostis2015robust}. See~\cite{hadjicostis2018distributed}, for a comprehensive study of the virtual nodes approach via exchanging accumulated sums.
    \item $\tau_i(k)$: The variable $\tau_i(k)$ for $i \in V$ is an indicator variable that equals $1$ if node $i$ has waken up at time $k$, and equals $0$ otherwise. \item $\tau_{ij}^l(k)$: The variable $\tau_{ij}^l(k)$ for $(i,j)\in E$ and $1\leq l \leq L_d$ is $1$ if $\tau_i(k)=1$ and the messages sent from node $i$ to node $j$ at time $k$ will arrive after an effective delay of $l$.
    \item $u_{ij}^y(k)$: The variable $u_{ij}^y(k)$ is defined recursively as
    \resizebox{1.05\linewidth}{!}{
  \begin{minipage}{\linewidth}
  \begin{align*}
\hspace{-0.91cm} u_{ij}^y(k+1) \eqdef \Big( 1 {-} \sum_{l=1}^{L_d}\tau^l_{ij}(k)\Big) \Big(u_{ij}^y(k) {+ }\phi_i^x(k{+}1) {-} \phi_i^x(k)    \Big),
\end{align*}
  \end{minipage}
}
    with initial value $u_{ij}^y(0)\eqdef 0$.
    \item $\varphi_i^{\bar{\theta}}(k)$: Defined as $\varphi_i^{\bar{\theta}}(k)=  y_i(k)\log ({\mu_i^{\theta_v}(k)}/{\mu_i^{\theta_w}(k)})$ for $\bar{\theta}=\{\theta_v, \theta_w\}$, where  $\theta_v \notin \Theta^*$, and $\theta_w \in \Theta^*$.
    \item $u_{ij\mid\theta}^\mu(k)$: The variable $u_{ij}^y(k)$ is defined recursively as
    \resizebox{1.05\linewidth}{!}{
  \begin{minipage}{\linewidth}
  \begin{align*}
\hspace{-0.1cm}  u_{ij|\theta}^{\mu}(k+1)=\left( u_{ij|\theta}^{\mu}(k)\frac{\phi_{i,\theta}^\mu (k+1)}{\phi_{i,\theta}^\mu (k)}\right)^{\left(1-\sum_{l=1}^{L_d}\tau_{ij}^l(k)\right)},
\end{align*}
  \end{minipage}
}
    with initial value $u_{ij\mid \theta }^\mu(0)\eqdef 1$.
    \item $\upsilon^{\bar\theta}_{ij}(k)$: Defined as $\upsilon^{\bar\theta}_{ij}(k) = \log (u_{ij\mid\theta_v}^\mu(k)/u_{ij\mid\theta_w}^\mu(k))$
    \item $\mu_{ij|\theta}^{l}(k)$: The variable $\mu_{ij|\theta}^{l}(k)$ is defined as
    \resizebox{1.00\linewidth}{!}{
  \begin{minipage}{\linewidth}
  \begin{align*}
\hspace{-0.1cm} \mu_{ij|\theta}^{l}(k+1)=\left(u_{ij\mid \theta}^{\mu}(k)\big(\mu_i^{\theta}(k)\big)^{y_i(k)/(d_i^+ +1)}\right)^{\tau_{ij}^{l}(k)}\mu_{ij\mid \theta}^{l+1}(k),
\end{align*}
  \end{minipage}
}
where
\begin{align*}
      \mu_{ij|\theta}^{L_d}(k+1)=\left(u_{ij}^{\mu}(k)\big(\mu_i^{\theta}(k)\big)^{y_i(k)/(d_i^+ +1)}\right)^{\tau_{ij}^{L_d}(k)}.
\end{align*}
 \item $\varphi_{ij}^{\bar{\theta},l}(k)$: Defined as $\varphi_{ij}^{\bar{\theta},l}(k)=  \log ({\mu_{ij\mid\theta_v}^l(k)}/{\mu_{ij\mid\theta_w}^l(k)})$ for $\bar{\theta}=\{\theta_v, \theta_w\}$, where  $\theta_v \notin \Theta^*$, and $\theta_w \in \Theta^*$.
\end{itemize}

The previous definitions might appear out of context or hard to parse at this time. However, such variables will significantly reduce notation and analysis in the following.

\begin{lemma}\label{lemma:linear}
For every pair $\bar{\theta}=\{\theta_v, \theta_w\}$, where $\theta_v\notin \Theta^*$ and $\theta_w \in \Theta^*$. the iterates generated by Algorithm~\ref{alg:main} are equivalent to the following pair of linear processes:
\begin{align}
    \mathbf{Y}(k+1) &=\mathbf{M}(k)\mathbf{Y}(k) \label{eq:linear_y}, \\
    \bm{\psi}^{\bar\theta}(k+1)&=\mathbf{M}(k)\bm{\psi}^{\bar\theta}(k)+ \bm{L}^{\bar\theta}(k+1),\label{eq:linear_phi}
\end{align}
where the matrix $\mathbf{M}(k)$ is the same matrix as in~\eqref{eq:linear_raps}, and
\begin{align*}
    \mathbf{Y}(k) & \eqdef [\mathbf{y}(k)^{\top}, \mathbf{y}^1(k)^{\top}, \ldots, \mathbf{y}^{L_d}(k)^{\top}, \mathbf{u}^y(k)^{\top}]^{\top},\\
    \bm{\psi}^{\bar\theta}(k) & \eqdef [\boldsymbol{\varphi}^{\bar{\theta}}(k)^{\top}, \boldsymbol{\varphi}^{\bar{\theta},1}(k)^{\top}, \ldots, \boldsymbol{\varphi}^{\bar{\theta},L_d}(k)^{\top}, \boldsymbol{\upsilon}^{\bar\theta} (k)^{\top}]^{\top},
    \end{align*}
where $\mathbf{y}(k)$ stacks all $y_i(k)$, $\mathbf{y}^l(k)$ stacks all $y_{ij}
^l$, and $\mathbf{u}^y(k)$ stacks all $u_{ij}^y(k)$. Similarly, $\boldsymbol{\varphi}^{\bar{\theta},l}(k)$ stacks all $\varphi_i^{\bar{\theta}}(k)$, $\boldsymbol{\varphi}^{\bar{\theta},l}(k)$ stacks all $\varphi_{ij}^{\bar{\theta},l}(k)$, and $\boldsymbol{\upsilon}^{\bar\theta} (k)$ stacks all  $\upsilon^{\bar\theta}_{ij}(k)$. Finally, the first $n$ entries of the vector $\bm{L}^{\bar\theta}(k)$ stacks all $\log(P^i_{\theta_v}(X_{k}^i)/P^i_{\theta_w}(X_{k}^i))$, and zeros in all other entries.
\end{lemma}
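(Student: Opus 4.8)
The plan is to establish the two linear representations separately, leveraging the fact that the weight dynamics of Algorithm~\ref{alg:main} coincide exactly with those of RAPS (Algorithm~\ref{alg:raps}). For \eqref{eq:linear_y}, I would observe that the update of $y_i$, the accumulation of $\phi_i^y$, and the time-stamp and buffer logic in the message-processing step of Algorithm~\ref{alg:main} are verbatim copies of the $y$-dynamics of Algorithm~\ref{alg:raps}. Hence \eqref{eq:linear_y} follows immediately by instantiating the $\boldsymbol{\phi}$-process of \eqref{eq:linear_raps} from \cite[Lemma 5]{JMLR:21}, with the virtual-node buffer variables $y_{ij}^l$ and $u_{ij}^y$ supplying the augmented delay coordinates. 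The matrix $\mathbf{M}(k)$ and its column-stochasticity are thereby inherited directly, and no new computation is required for this part.

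The heart of the argument is showing that the weighted log-ratio of beliefs obeys the same mixing with an additive driving term. First I would take the ratio $\mu_i^{\theta_v}/\mu_i^{\theta_w}$ in the belief-update step; the normalization constant $Z_i$ cancels, so taking logarithms converts the geometric update into an additive one. Multiplying through by $\hat{y}_i = y_i(k{+}1)$ and using $\varphi_i^{\bar\theta}(k) = y_i(k)\log(\mu_i^{\theta_v}(k)/\mu_i^{\theta_w}(k))$, the exponent $1/\hat{y}_i$ cancels on the left, leaving $\varphi_i^{\bar\theta}(k{+}1)$. On the right, the self-term $(\mu_i^{\theta})^{y_i/(d_i^+ +1)}$ contributes exactly $\tfrac{1}{d_i^+ +1}\varphi_i^{\bar\theta}(k)$, matching the diagonal weight in the first block-row of $\mathbf{M}(k)$; the incoming factors $\rho_{ij\mid\theta}^{*\mu}/\rho_{ij\mid\theta}^{\mu}$ produce the $\upsilon_{ij}^{\bar\theta}$ mixing terms with the same off-diagonal coefficients; and the observation factor $P^i_\theta(x^i_{k+1})$ contributes the log-likelihood ratio $\log(P^i_{\theta_v}(X^i_{k+1})/P^i_{\theta_w}(X^i_{k+1}))$, which is precisely the first $n$ entries of $\bm{L}^{\bar\theta}(k{+}1)$.

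To close the correspondence I would verify that the delay buffers transform consistently. Under the logarithmic change of variables $\upsilon_{ij}^{\bar\theta} = \log(u_{ij\mid\theta_v}^{\mu}/u_{ij\mid\theta_w}^{\mu})$ and $\varphi_{ij}^{\bar\theta,l} = \log(\mu_{ij\mid\theta_v}^l/\mu_{ij\mid\theta_w}^l)$, the multiplicative recursions defining $u_{ij\mid\theta}^{\mu}$ and $\mu_{ij\mid\theta}^l$ become additive recursions governed by the \emph{same} indicator coefficients $\tau_i(k)$ and $\tau_{ij}^l(k)$ that define the virtual-node structure of $\mathbf{M}(k)$ in the $y$-process. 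Since these coefficients are identical across the two processes and every positive normalization factor cancels inside the ratios, the augmented vector $\bm{\psi}^{\bar\theta}(k)$ evolves under exactly the same $\mathbf{M}(k)$ as $\mathbf{Y}(k)$, with $\bm{L}^{\bar\theta}(k{+}1)$ supported only on the true-node coordinates. I expect the main obstacle to be the bookkeeping needed to confirm that each virtual-node coordinate of the belief process aligns with its $y$-process counterpart under the matching time-stamp and delay indicators; once the logarithm is shown to send the geometric buffer dynamics to additive dynamics with identical coefficients, \eqref{eq:linear_phi} follows.
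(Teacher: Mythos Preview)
Your strategy is correct and coincides with the paper's: reduce \eqref{eq:linear_y} to the RAPS analysis in~\cite{JMLR:21} (the paper invokes Lemmas~1--4 there), then take log-ratios of the belief update so that $Z_i$ cancels and each block of $\bm{\psi}^{\bar\theta}$ satisfies an additive recursion with the same indicator coefficients $\tau_i(k),\tau_{ij}^l(k)$ that define $\mathbf{M}(k)$. One detail to correct: the incoming factors $\log(\rho_{ij\mid\theta_v}^{*\mu}/\rho_{ij\mid\theta_v}^{\mu}) - \log(\rho_{ij\mid\theta_w}^{*\mu}/\rho_{ij\mid\theta_w}^{\mu})$ do \emph{not} produce the $\upsilon_{ij}^{\bar\theta}$ terms in the first block-row; via \cite[Lemma~4]{JMLR:21} one has $\rho_{ji\mid\theta}^{\mu}(k{+}1)=\mu^1_{ij\mid\theta}(k)\,\rho_{ji\mid\theta}^{\mu}(k)$, so the neighbor contribution to $\varphi_i^{\bar\theta}(k{+}1)$ is $\sum_{j\in\mathcal{N}_i^-}\varphi_{ij}^{\bar\theta,1}(k)$, i.e.\ the first virtual-delay coordinate, exactly matching the corresponding row of $\mathbf{M}(k)$ in the $y$-process. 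The $\upsilon^{\bar\theta}$ variables enter only through the buffer recursions for $\varphi_{ij}^{\bar\theta,l}$, as you correctly describe in your last paragraph.
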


\begin{proof}
The relation in~\eqref{eq:linear_y} follows immediately from~\cite[Lemmas 1,2,3,4]{JMLR:21} since the evolution of the variables $y_i$ is the same for both the RAPS Algorithm and Algorithm~\ref{alg:main}. Unfortunately this is not the case for~\eqref{eq:linear_phi}, which we will show next.

Lets consider some arbitrary time $k\geq 0$. Thus, from Line~$7$ in Algorithm~\ref{alg:main} it follows that
\begin{align}\label{eq:phi_u}
 \hspace{-0.3cm}   \log\frac{\phi^{\mu}_{i,\theta_v}(k{+}1)}{\phi^{\mu}_{i,\theta_w}(k{+}1)} {=} \log\frac{\phi^{\mu}_{i,\theta_v}(k)}{\phi^{\mu}_{i,\theta_w}(k)}{+}\frac{\tau_i(k)}{d_i^{+}{+}1}y_i(k)\log\frac{\mu_i^{\theta_v}(k)}{\mu_i^{\theta_w}(k)},
\end{align}
and from Line~$17$ in Algorithm~\ref{alg:main} we get
\begin{align}\label{eq:log_process}
    &\log \frac{\mu^{\theta_v}_i(k+1)}{\mu^{\theta_w}_i(k+1)}  \nonumber \\
    & = \frac{1}{y_i(k+1)} \Bigg(y_i(k) \Big(1-\tau_i(k)+\frac{\tau_i(k)}{d_i^{+}+1} \Big)\log \frac{\mu^{\theta_v}_i(k)}{\mu^{\theta_w}_i(k)} + \nonumber \\
    & \qquad  +\sum_{j \in\mathcal{N}_i^{-}} \Big(\log \frac{\rho_{ij \mid \theta_v}^{\mu} (k+1)}{\rho_{ij \mid \theta_w}^{\mu}(k+1) } - \log \frac{\rho_{ij \mid \theta_v}^{\mu} (k)}{\rho_{ij \mid \theta_w}^{\mu}(k) }  \Big)  \nonumber \\
    & \qquad + \log\frac{P_{\theta_v}^i(X^i_{k+1})}{P_{\theta_w}^i(X^i_{k+1})}  \Bigg),
\end{align}
where $y_i({k+1}) = y_i(k)/(d_i^++1) + \sum_{j \in\mathcal{N}_i^{-}}(\rho_{ij}^{*y}-\rho_{ij}^{y})$.

Now, from~\cite[Lemma 4]{JMLR:21}, we have that for all $k\geq 0$ and $(i,j) \in E$,
\begin{align*}
    \rho_{ji\mid \theta}^{\mu}(k+1) & = \mu^1_{ij\mid \theta}(k) \rho_{ji\mid \theta}^{\mu}(k).
\end{align*}
Thus, we can write~\eqref{eq:log_process} as
\begin{align}\label{eq:aux_4}
    &\varphi^{\bar\theta}_{i}(k+1)  = \Big(1-\tau_i(k)+\frac{\tau_i(k)}{d_i^{+}+1} \Big)\varphi^{\bar\theta}_{i}(k) + \nonumber \\
    & \qquad  +\sum_{j \in\mathcal{N}_i^{-}} \log \frac{\mu^1_{ij\mid \theta_v}(k)}{\mu^1_{ij\mid \theta_w}(k)}  + \log\frac{P_{\theta_v}^i(X^i_{k+1})}{P_{\theta_w}^i(X^i_{k+1})}. 
\end{align}

Additionally, from the definition of $u_{ij|\theta_v}^{\mu}(k)$ and~\eqref{eq:phi_u}, it follows that 
\begin{align}
    & \log \frac{u_{ij|\theta_v}^{\mu}(k+1)}{u_{ij|\theta_w}^{\mu}(k+1)}= \Big(1-\sum_{l=1}^{L_d}\tau_{ij}^k(k)\Big) \Bigg( \log \frac{u_{ij|\theta_v}^{\mu}(k)}{u_{ij|\theta_w}^{\mu}(k)} +  \nonumber \\
    & \qquad + \log \frac{\phi^\mu_{i\mid \theta_v}(k+1)}{\phi^\mu_{i\mid \theta_w}(k+1)} - \log \frac{\phi^\mu_{i\mid \theta_v}(k)}{\phi^\mu_{i\mid \theta_w}(k)} \Bigg) \nonumber\\
    & =  \Big(1-\sum_{l=1}^{L_d}\tau_{ij}^k(k)\Big) \Bigg( \log \frac{u_{ij|\theta_v}^{\mu}(k)}{u_{ij|\theta_w}^{\mu}(k)} + \nonumber\\
    & \qquad + \frac{\tau_i(k)}{d_i^+ +1} y_i(k) \log\frac{\mu_k^{\theta_v}(k)}{\mu_k^{\theta_w}(k)} \Bigg) \nonumber \\
    & \boldsymbol{\upsilon}_{ij}^{\bar{\theta}}(k{+}1) {=}  \Big(1-\sum_{l=1}^{L_d}\tau_{ij}^k(k)\Big) \Bigg( \boldsymbol{\upsilon}_{ij}^{\bar{\theta}}(k) {+} \frac{\tau_i(k)}{d_i^+ {+}1} \varphi_i^{\bar \theta }(k) \Bigg). \label{eq:aux_3}
\end{align}

Finally, from the definition of $\mu_{ij|\theta}^{l}(k+1)$, it follows that
\begin{align}
    &\log \frac{\mu_{ij|\theta_v}^{l}(k{+}1)}{\mu_{ij|\theta_w}^{l}(k{+}1)} = \tau_{ij}^l(k)\Big(\log\frac{u_{ij\mid\theta_v}(k)}{u_{ij\mid\theta_w}(k)} {+} \nonumber \\ 
    & \qquad + \frac{y_i(k)}{d_i^+ {+}1} \log \frac{\mu_i^{\theta_v}(k)}{\mu_i^{\theta_w}(k)}\Big) {+} 
     \log \frac{\mu_{ij\mid\theta_v}^{l+1}(k)}{\mu_{ij\mid\theta_w}^{l+1}(k)},  \nonumber \\
    &  \hspace{-0.2cm}\varphi^{\bar\theta , l}_{ij}(k{+}1)  {=} \tau_{ij}^l(k) \Big( \upsilon^{\bar\theta}_{ij}(k) {+} \frac{1}{d_i^+ {+}1 } \phi^{\bar \theta}_i(k)\Big) {+} \varphi^{\bar\theta,l+1}_{ij}(k), \label{eq:aux_2}
\end{align}
and similarly,
\begin{align}\label{eq:aux_1}
     \varphi^{\bar\theta , L_d}_{ij}(k+1)  = \tau_{ij}^{L_d}(k) \Big( \upsilon^{\bar\theta}_{ij}(k) + \frac{1}{d_i^+ +1 } \phi^{\bar \theta}_i(k)\Big) .
\end{align}

The desired result follow by concatenating~\eqref{eq:aux_1}, \eqref{eq:aux_2}, \eqref{eq:aux_3}, and \eqref{eq:aux_4}.

\end{proof}

\subsection{Proof of Asymptotic Exponential Concentration of Beliefs}

Before proving our main result, we propose a slight reformulation of Theorem~\ref{thm:raps} that will allow us to prove our main result about the asymptotic exponential concentration of the beliefs of all agents around the optimal hypotheses set.

Note from Theorem~\ref{thm:raps} and~\eqref{eq:linear_raps} we can define a vector $\boldsymbol{z}(k) \eqdef \boldsymbol{\chi}(k)\circ \boldsymbol{\phi}^{-}(k)$, where ${\phi}^{-}_i(k) =1/{\phi}_i(k) $ if ${\phi}_i(k)\neq 0 $ and ${\phi}_i(k)=0$ otherwise. Thus, $\boldsymbol{z}(k+1) = \boldsymbol{P}(k)\boldsymbol{z}(k)$, for $\boldsymbol{P}(k)\eqdef \text{diag}(\boldsymbol{\phi}^{-}(k+1))\boldsymbol{M}(k)\boldsymbol{\phi}(k)$. Thus, equivalently, we can write
\begin{align}\label{eq:aux_thm}
    \Big| [\boldsymbol{P}(k:t)]_{ij} - \frac{1}{n}  \Big| \leq \delta \lambda^{k-t}, \ \ \forall k \geq t \geq 0.
\end{align}

We are now ready to prove Theorem~\ref{thm:main}.

\begin{proof}[Theorem~\ref{thm:main}] Initially, define the process
\begin{align*}
    \boldsymbol{\Phi}^{\bar\theta}(k)\eqdef \boldsymbol{\psi}^{\bar\theta}(k) \circ \boldsymbol{Y}^{-}(k),
\end{align*}
with $Y_i^{-}(k) = 1/Y_i(k)$ if $Y_i(k)\neq 0$, and  $Y_i^{-}(k)=0$ otherwise. Thus,
\begin{align*}
    &\boldsymbol{\Phi}^{\bar\theta}(k+1)  =  \boldsymbol{P}(k)\boldsymbol{\Phi}^{\bar\theta}(k)+\boldsymbol{L}^{\bar \theta}(k+1)\circ\boldsymbol{Y}^{-}(k+1)\\
    & = \boldsymbol{P}(k{:}0)\boldsymbol{\Phi}^{\bar\theta}(0) {+} \sum_{t=0}^{k}\boldsymbol{P}(k{:}t)\boldsymbol{L}^{\bar \theta}(t){+}\boldsymbol{L}^{\bar \theta}(k+1){\circ}\boldsymbol{Y}^{{-}}(k)
\end{align*}

The first term in the above expression is precisely equal to zero following our choice of initial beliefs being uniform. Now, lets focus on the second term, i.e., $\sum_{t=0}^{k}\boldsymbol{P}(k{:}t)\boldsymbol{L}^{\bar \theta}(t)$. If we add and subtract the matrix $\frac{1}{n}\boldsymbol{1}^\top \boldsymbol{1}$, we obtain,
\begin{align*}
    \sum_{t=0}^{k}\big(\boldsymbol{P}(k{:}t)-\frac{1}{n}\boldsymbol{1}^\top \boldsymbol{1}\big)\boldsymbol{L}^{\bar \theta}(t)) + \sum_{t=0}^{k}\frac{1}{n}\boldsymbol{1}^\top \boldsymbol{1}\boldsymbol{L}^{\bar \theta}(t),
\end{align*}
and now similarly lets add and subtract the vector $\boldsymbol{H^{\bar\theta}} = \mathbb{E}\boldsymbol{L}^{\bar \theta}(t)$, which is the expected value of the logarithmic ration of the likelihood functions. Thus, we have
\begin{align*}
    \frac{1}{n}\sum_{t=0}^{k}\sum_{l=1}^{n+m'}({L}_i^{\bar \theta}(t) -{H}_i^{\bar \theta}(t)) \boldsymbol{1}+\frac{1}{n}\sum_{t=0}^{k}\sum_{l=1}^{n+m'}{H}_i^{\bar \theta}(t) \boldsymbol{1}\\
    = \frac{1}{n}\sum_{t=0}^{k}\sum_{l=1}^{n+m'}({L}_i^{\bar \theta}(t) -{H}_i^{\bar \theta}(t)) \boldsymbol{1}- k\frac{1}{n}\sum_{i=1}^n D_{KL}(P^i_{\theta_w}\|P^i_{\theta_v}),
    \end{align*}
    where we have use the fact that ${H}_i = -D_{KL}(P^i_{\theta_w}\|P^i_{\theta_v})$.
    
As a final step, we show the limit of the process $\frac{1}{k}\boldsymbol{\Phi}^{\bar\theta}(k)$ as $k \to\infty$.
\begin{align*}
    \lim_{k\to \infty} \frac{1}{k}\boldsymbol{\Phi}^{\bar\theta}(k) {=} \underbrace{ \lim_{k\to \infty} \frac{1}{k} \sum_{t=0}^{k}\big(\boldsymbol{P}(k{:}t){-}\frac{1}{n}\boldsymbol{1}^\top \boldsymbol{1}\big)\boldsymbol{L}^{\bar \theta}(t))}_{\textcircled{1}} {+} \\
   + \underbrace{ \lim_{k\to \infty} \frac{1}{k} \frac{1}{n}\sum_{t=0}^{k}\sum_{l=1}^{n+m'}({L}_i^{\bar \theta}(t) {-}{H}_i^{\bar \theta}(t)) \boldsymbol{1}}_{\textcircled{2}}{-} \frac{1}{n}\sum_{i=1}^n D_{KL}(P^i_{\theta_w}\|P^i_{\theta_v}).
\end{align*}

The term $\textcircled{1}$ is equal to zero deterministically as an immediate consequence of~\eqref{eq:aux_thm} (c.f. Theorem~\ref{thm:raps}), and Assumption~\ref{assum:bound}. Moreover, the term $\textcircled{2}$ is equal to zero almost surely due to the Assumption~\ref{assum:bound}, which implies that the variance of ${L}_i^{\bar \theta}(k)$ is bounded for all $k\geq0$, independence of the sequence of observations, and a direct application of Kolmogorov's strong law of large numbers.

Therefore we can conclude
\begin{align*}
     \lim_{k\to \infty} \frac{1}{k}\boldsymbol{\Phi}^{\bar\theta}(k) {=}{-} \frac{1}{n}\sum_{i=1}^n D_{KL}(P^i_{\theta_w}\|P^i_{\theta_v}),
\end{align*}
and in particular
\begin{align*}
     \lim_{k\to \infty} \frac{1}{k} \log \frac{\mu^{\theta_v}_i(k)}{\mu^{\theta_w}_i(k)} {=}{-} \frac{1}{n}\sum_{i=1}^n D_{KL}(P^i_{\theta_w}\|P^i_{\theta_v}),
\end{align*}
and  the desired result follows.
\end{proof}

\section{Numerical Experiments}\label{sec:numerics}

In this section, we present numerical experiments to illustrate the behavior of Algorithm~\ref{alg:main} under the network model that allows asynchronous updates, delays, and link failures.

We take a network with $n = 4$ agents connected through three different base topologies: a path, a start, and a cycle graph. We simulate link failures by assigning a probability of failing, where the package is lost (if the number of consecutive fails exceeds $L_f=5$, the is forced to connect at least once). Moreover, agents have a certain probability of waking at each time instant $k$ (if the number of consecutive times of not waking exceeds $L_u=5$, the agent is forced to wake up). If the agent $i$ wakes at the instant $k$, it observes a realization of the random variable $X_k^i$, which we assume to be a truncated normal distribution with means $i$ and variance $1$. There are $m = 3$ hypothesis for each agent whose correspondent parameters and confidence values are presented in Fig.~\ref{fig:options}. In this example, the initial conditions are set as a uniform distribution for all the hypotheses, and the optimal solution is $\theta_3$ with $F(\theta^*)=0.29$. It is worth noting that agents $1$ and $4$ have the highest confidence values for $\theta_1$ individually; agent $2$ has the highest individual confidence in $\theta_2$. However, the network's optimal hypothesis is $\theta_3$; thus, cooperation is needed.

\begin{figure}[t!]
\centering
\includegraphics[width=0.4\textwidth]{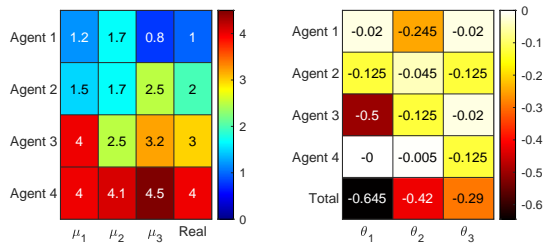}
\caption{\textbf{Left:} Mean of each likelihood function for each hypothesis, as well as the mean of the true distribution of the observations. \textbf{Right:} Function value of $F(\theta)$ for each hypotheses, as well as the value of the local function for each agent.}
\label{fig:options}
\end{figure}

We first show our results for a star topology with a waking probability $p_w=0.9$ and a link failure probability $p_l=0.2$ at each time $k$. Fig.~\ref{fig:star1} shows the evolution of the beliefs of each agent. As suggested by Theorem~\ref{thm:main}, all agents concentrate their beliefs on $\theta^*$ after a transitory period. It can be seen that the beliefs of the agents oscillate between different hypotheses considering their conflict between the individual best response and the network dynamics. However, after some time, oscillations attenuate and eventually disappear. If an agent is not connected for a period of time, its belief fluctuates, but once it connects with the network again, its beliefs concentrate again around the optimal hypotheses.

\begin{figure}[t!]
\centering
\small{\textbf{Star Graph, 4 Agents, $p_w=0.9$, $p_l=0.2$}}
\includegraphics[trim=5 24 10 24,clip,width=0.81\linewidth]{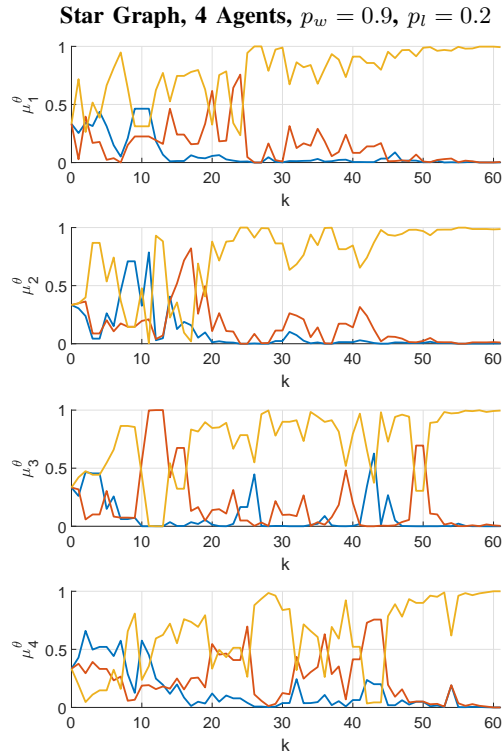}
\vspace{-0.5cm}
\caption{Beliefs values evolution of each agent regarding to each one of the 3 hypothesis (blue, red and yellow, respectively) with a star base graph, 90\% wake probability and 20\% link failure probability.}
\label{fig:star1}
\end{figure}

As the second network configuration, we continued with the base star topology and set $p_w=0.5$ and $p_l=0.1$ to illustrate the algorithm's behavior with higher delays between the send and receipt of the messages. Fig. \ref{fig:star2} presents the evolution of the beliefs with these new conditions. Once more, the algorithm converges to the optimal solution of the problem $\theta^*$, but in this case, after a longer transitory period, given the larger periods of sleep for the agents. It is also noticeable that the fluctuations produced after the convergence become frequent. However, even the strongest fluctuations observed eventually return to the optimal solution.

\begin{figure}[t!]
\centering
\small{\textbf{Star Graph, 4 Agents, $p_w=0.5$, $p_l=0.1$}}
\includegraphics[trim=5 24 10 24,clip,width=0.81\linewidth]{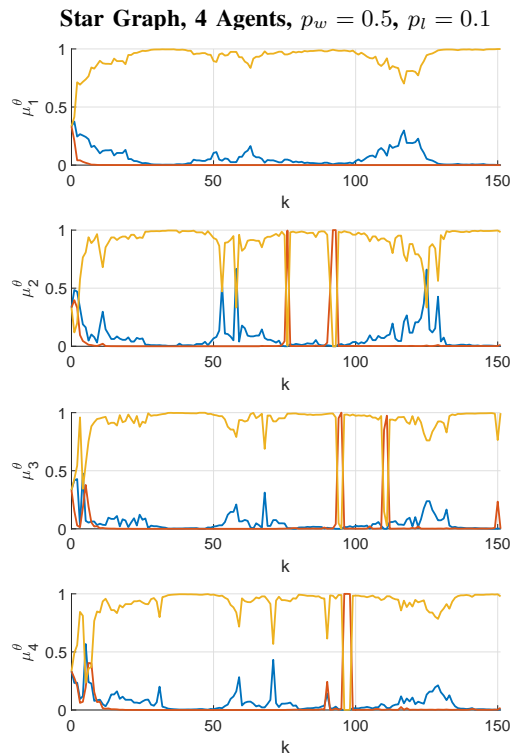}
\vspace{-0.5cm}
\caption{Beliefs values evolution of each agent regarding to each one of the 3 hypothesis (blue, red and yellow respectively) with a star base graph, 50\% wake probability and 10\% link failure probability.}
\label{fig:star2}
\end{figure}

Next, Fig.~\ref{fig:Path2} shows the behavior of Algorithm~\ref{alg:main} on a path graph with $p_w=0.5$ and $p_l=0.1$. In this scenario, we identify a behavior similar to the star configuration presented with the same parameters supporting the algorithm's network independence. Fig.~\ref{fig:Path1} shows the evolution of the beliefs with this topology and the parameters set to $p_w=0.9$ and $p_l=0.2$. A behavior similar to the base star configuration was observed. Concentration around the optimal hypothesis is achieved for all agents. Some minor fluctuations appear sporadically without affecting the final result.

\begin{figure}[t]
\centering
\small{\textbf{Path Graph, 4 Agents, $p_w=0.5$, $p_l=0.1$}}
\includegraphics[trim=5 24 10 24,clip,width=0.81\linewidth]{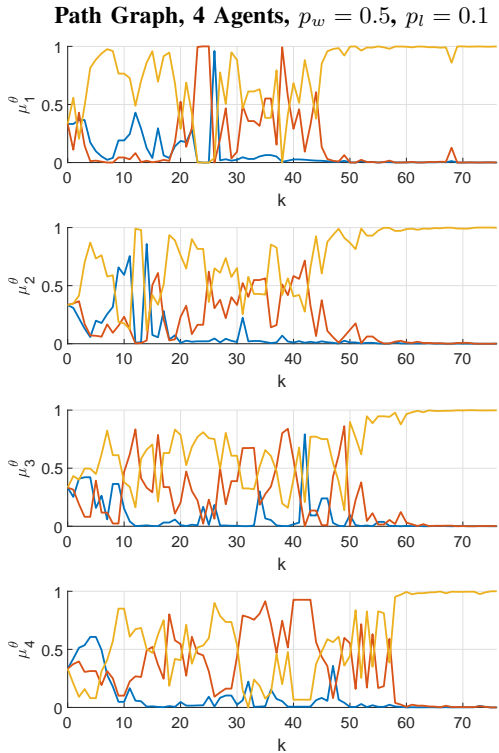}
\vspace{-0.5cm}
\caption{Beliefs values evolution of each agent regarding to each one of the 3 hypothesis (blue, red and yellow respectively) with a path base graph, 90\% wake probability and 20\% link failure probability.}
\label{fig:Path1}
\end{figure}

\begin{figure}[t]
\centering
\small{\textbf{Path Graph, 4 Agents, $p_w=0.9$, $p_l=0.2$}}
\includegraphics[trim=5 24 10 24,clip,width=0.81\linewidth]{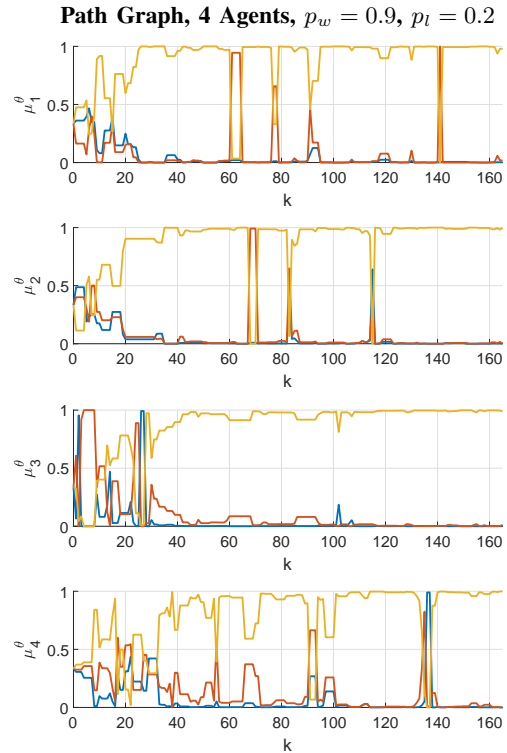}
\vspace{-0.5cm}
\caption{Beliefs values evolution of each agent regarding to each one of the 3 hypothesis (blue, red and yellow respectively) with a path base graph, 50\% wake probability and 10\% link failure probability.}
\label{fig:Path2}
\end{figure}

Finally, we consider a a cycle graph in Fig.~\ref{fig:Circle1} with $p_w=0.9$ and $p_l=0.2$. Figure \ref{fig:Circle2} shows the evolution of the beliefs in the circular topology with parameters $p_w=0.5$ and $p_l=0.1$. 

\begin{figure}[t]
\centering
\small{\textbf{Cycle Graph, 4 Agents, $p_w=0.9$, $p_l=0.2$}}
\includegraphics[trim=5 24 10 24,clip,width=0.81\linewidth]{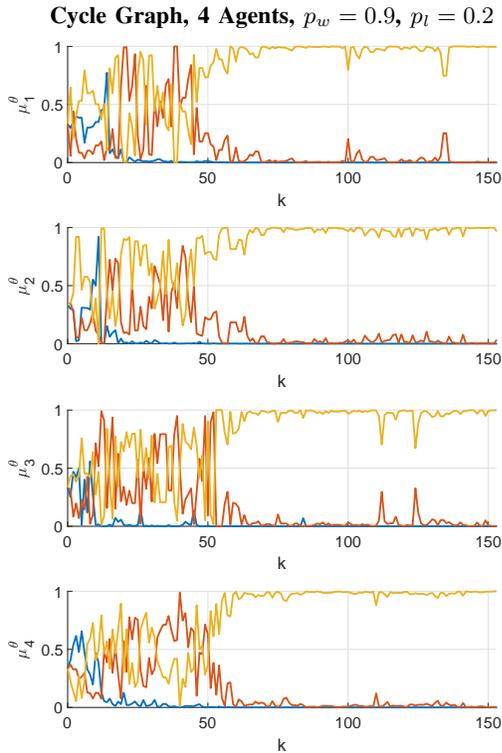}
\vspace{-0.5cm}
\caption{Beliefs values evolution of each agent regarding to each one of the 3 hypothesis (blue, red and yellow respectively) with a circular base graph, 90\% wake probability and 20\% link failure probability.}
\label{fig:Circle1}
\end{figure}

\begin{figure}[t]
\centering
\small{\textbf{Cycle Graph, 4 Agents, $p_w=0.5$, $p_l=0.1$}}
\includegraphics[trim=5 24 10 24,clip,width=0.81\linewidth]{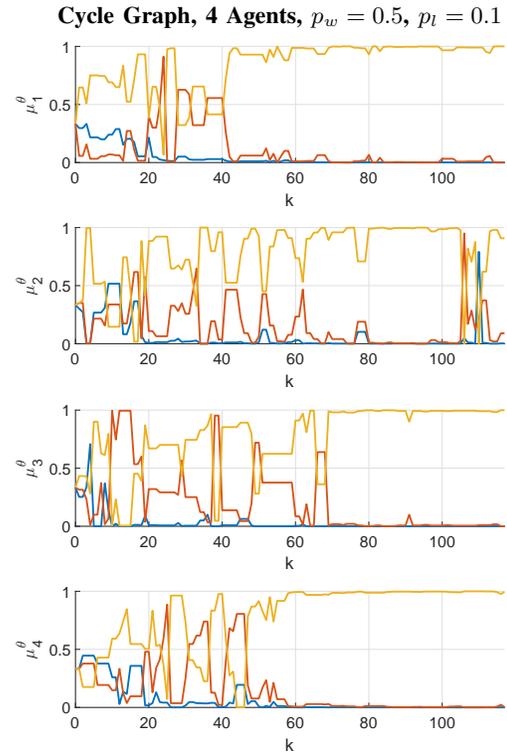}
\vspace{-0.5cm}
\caption{Beliefs values evolution of each agent regarding to each one of the 3 hypothesis (blue, red and yellow respectively) with a circular base graph, 50\% wake probability and 10\% link failure probability.}
\label{fig:Circle2}
\end{figure}
%
\section{Conclusions}\label{sec:conclusions}

We proposed a robust cooperative learning algorithm for distributed non-Bayesian learning that guarantees the asymptotic exponential concentration of the beliefs on the set of optimal hypotheses. Contrary to available non-Bayesian learning algorithms, we build upon recently proposed network models with relatively weak connectivity assumptions. These new connectivity models allow for asynchronous communications, message delays, unpredictable message losses, and directed communications. Our result shows that all network agents, regardless of their observation model or their relative centrality in the network, concentrate their beliefs, asymptotically exponentially fast. The beliefs concentrate at a network-independent rate and lower bounded by the average optimality gap between the optimal hypotheses set and the second-best hypotheses. 

Future work will consist of a non-asymptotic analysis of the belief concentration phenomena. Our main result shows that the belief concentration rate is lower bounded by the optimality gap between the optimal hypotheses set and the second-best hypotheses. Such lower bound is effectively zero is one considers a continuum of hypotheses or parameter space, which will be the natural setup for standard estimation problems such as parametric inference in the exponential family of distributions.


\bibliographystyle{./bibliography/IEEEtran}
\bibliography{./bibliography/IEEEabrv,./bibliography/IEEEACC2020}

\end{document}